\newtheorem{theorem}{Theorem}[section]
\newtheorem{lemma}[theorem]{Lemma}
\theoremstyle{definition}
\newtheorem{example}[theorem]{Example}
\definecolor{darkblue}{rgb}{0,0,0.6}
\title[Two unfortunate properties of pure $f$-vectors]{Two unfortunate properties of pure $f$-vectors}
\author{Adri\'an Pastine}
\address{Department of Mathematical  Sciences\\ Michigan Tech\\ Houghton, MI  49931-1295}
\email{agpastin@mtu.edu; zanello@math.mit.edu}
\author{Fabrizio Zanello}
\thanks{2010 {\em Mathematics Subject Classification.} Primary: 05E40; Secondary: 13F55, 05E45, 05B07, 13H10.\\\indent
{\em Key words and phrases.} Pure simplicial complex; Cohen-Macaulay complex; Unimodality; Pure $f$-vector; Interval Property; Pure $O$-sequence; Steiner system}
\begin{document}

\begin{abstract}
The set of $f$-vectors of pure simplicial complexes is an important but little understood object in combinatorics and combinatorial commutative algebra. Unfortunately, its explicit characterization appears to be a virtually intractable problem, and its structure very irregular and complicated. The purpose of this note, where we combine a few different algebraic and combinatorial techniques, is to lend some further evidence to this fact.

We  first show that pure  (in fact, Cohen-Macaulay) $f$-vectors can be nonunimodal with arbitrarily many peaks, thus improving the corresponding results known for level Hilbert functions and pure $O$-sequences. We provide both an algebraic and a combinatorial argument for this result. Then, answering negatively a question of the second author and collaborators posed in the recent AMS Memoir on pure $O$-sequences, we show that the Interval Property fails for the set of pure $f$-vectors, even in dimension 2. 
\end{abstract}

\maketitle

\section{Introduction and preliminary results}

Simplicial complexes are important objects in a number of mathematical areas, ranging from combinatorics to algebra to topology (see e.g. \cite{BH,MS,St3}). Similarly to Macaulay's theorem for arbitrary $O$-sequences,  there exists a nice characterization of the $f$-vectors of arbitrary simplicial complexes, namely the {Kruskal-Katona theorem} \cite{St3}. However, beyond what we know for pure $O$-sequences,   little is known today about the structure of the $f$-vectors of \emph{pure} simplicial complexes, i.e., those complexes whose maximal faces are equidimensional. (See the last chapter of the recent monograph \cite{BMMNZ} for some initial results.)

The problem of characterizing pure $f$-vectors is considered to be nearly intractable. In fact, the goal of this note is to offer some strong, further evidence to the common perception that, quite unfortunately, the structure of pure $f$-vectors is extremely irregular and complicated. Our two main results are (see below for the  relevant definitions): 1) Pure $f$-vectors can be {nonunimodal with arbitrarily many peaks}. Our theorem improves the corresponding results recently shown for level Hilbert functions \cite{Za0} and pure $O$-sequences \cite{BMMNZ}, and we will give two different proofs of it:  one is algebraic, and holds for the strict subset of Cohen-Macaulay $f$-vectors, and the other is combinatorial. 2) Pure $f$-vectors fail the Interval Property (IP), even in dimension 2. The IP is a natural structural property known to hold for several sequences of interest in combinatorial commutative algebra (see \cite{BMMNZ,Za1}). It was originally conjectured by the second author \cite{Za1} for level Hilbert functions, where it is still open; it was  then conjectured in \cite{BMMNZ} (see also \cite{MNZ_exp})  for pure $O$-sequences, and asked as a question for pure $f$-vectors. However, recently, it was disproved by A. Constantinescu and M. Varbaro \cite{CV} for pure $O$-sequences of large degree, and by R. Stanley and the second author \cite{RSFZ} for arbitrary $r$-differential posets. 

Let $V$ be a finite set. A collection $\Delta$ of subsets of $V$ is called a \emph{simplicial complex} if it is closed under  inclusion, i.e., if for each $F\in\Delta$ and $G\subseteq F$, we have $G\in\Delta$. The elements of a simplicial complex $\Delta$ are its \emph{faces}, and the maximal faces are called \emph{facets}. The \emph{dimension of a face} is its cardinality minus 1, and the \emph{dimension of $\Delta$} is the largest of the dimensions of its faces. The complex $\Delta$ is \emph{pure} if all of its facets have the same dimension. Finally,  $\Delta$ is \emph{Cohen-Macaulay} if its associated \emph{Stanley-Reisner ring} is Cohen-Macaulay  (see  \cite{St3} for all details). It is a well-known and easy-to-prove fact of algebraic combinatorics that Cohen-Macaulay   complexes are a (very special) class of pure complexes.

Let $f=(f_{-1}=1,f_0,\dots ,f_e)$  be the \emph{$f$-vector} of an $e$-dimensional simplicial complex $\Delta$; i.e., $f_i$ counts the number of faces of $\Delta$ of dimension $i$. We say that $f$ is \emph{pure} if it is the $f$-vector of some pure simplicial complex. Let $h=(h_0=1,h_1,\dots ,h_{e+1})$ be the \emph{$h$-vector} of  $\Delta$; one way to define $h$ is as the (integer) vector whose entries are determined by those of $f$ via the following linear transformation:
\begin{equation}\label{hf}
f_{k-1}=\sum_{i=0}^k \binom{e+1-i}{k-i}h_i,
\end{equation}
for all  $k=0,1,\dots ,e+1$ (see e.g. \cite{St3}).

A vector $v$ is \emph{unimodal} if $v$ does not strictly increase  after a strict decrease. We say that $v$ is \emph{nonunimodal with $N$ peaks} if $N\ge 2$ is the number of (nonconsecutive) maxima of $v$. E.g., the vector $(1,15,22,18,20,20,15,22)$ is nonunimodal with 3 peaks.

Let $n$ and $i$ be positive integers. The \emph{i-binomial expansion of n} is defined as
$$n_{(i)}=\binom{n_i}{i}+\binom{n_{i-1}}{i-1}+\dots+\binom{n_j}{j},$$
for integers $n_i>n_{i-1}>\dots >n_j\ge j\ge 1$. It is easy to see that the $i$-binomial expansion of $n$ exists and is unique, for every $n$ and $i$  (e.g., see \cite{BH}, Lemma 4.2.6). Further, define $$n^{<i>}={n_i+1\choose i+1}+{n_{i-1}+1\choose i-1+1}+\dots +{n_j+1\choose j+1}.$$

A   sequence  $1,h_1,\dots,h_t$ of nonnegative integers is an \emph{$O$-sequence} if it satisfies \emph{Macaulay's growth condition}: $h_{i+1}\leq h_i^{<i>},$ for all indices $i=1,\dots,t-1$.

The following crucial result, essentially due to F.H.S. Macaulay and R. Stanley, characterizes the $h$-vectors of Cohen-Macaulay  complexes.

\begin{lemma}\label{stan} A vector $h$ is the $h$-vector of a Cohen-Macaulay simplicial complex if and only if it is an $O$-sequence.
\end{lemma}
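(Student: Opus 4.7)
The plan is to handle the two directions separately, with Macaulay's original theorem on Hilbert functions of standard graded Artinian $k$-algebras as the common engine.

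For the forward direction, suppose $\Delta$ is an $e$-dimensional Cohen-Macaulay simplicial complex over an infinite field $k$, and let $R=k[\Delta]$ be its Stanley-Reisner ring. Then $R$ is a graded Cohen-Macaulay ring of Krull dimension $e+1$, so a general choice of degree-one elements yields a linear system of parameters $\theta_1,\dots,\theta_{e+1}$ which, by the CM hypothesis, is a regular sequence. Inverting the transformation \eqref{hf} shows that the Hilbert series of $R$ takes the form $\sum_i h_i t^i/(1-t)^{e+1}$; consequently the Artinian quotient $A=R/(\theta_1,\dots,\theta_{e+1})$ has Hilbert function exactly equal to the $h$-vector $h$ of $\Delta$. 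Since $A$ is a standard graded Artinian $k$-algebra, Macaulay's theorem forces its Hilbert function to satisfy Macaulay's growth condition, i.e.\ to be an $O$-sequence.

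For the converse, given an $O$-sequence $h=(1,h_1,\dots,h_s)$, I first invoke Macaulay's theorem to obtain a monomial ideal $I\subseteq S=k[x_1,\dots,x_n]$ (e.g.\ the lex-segment ideal) such that $S/I$ is an Artinian standard graded $k$-algebra with Hilbert function $h$. In general $I$ is not squarefree, so $S/I$ need not be a Stanley-Reisner ring. To remedy this, I \emph{polarize}: each generator $x_{i_1}^{a_1}\cdots x_{i_r}^{a_r}$ of $I$ is replaced by its squarefree analogue in an enlarged polynomial ring $S'$, producing the squarefree monomial ideal $I^{\mathrm{pol}}$ and the Stanley-Reisner complex $\Delta$ it defines. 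Standard properties of polarization then give two things at once: the differences of the polarization variables form a regular sequence on $S'/I^{\mathrm{pol}}$ whose quotient is $S/I$, so the $h$-vector of $\Delta$ equals the Hilbert function of $S/I$, namely $h$; and polarization preserves the Cohen-Macaulay property, so $\Delta$ is a Cohen-Macaulay complex realizing $h$.

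The main obstacle lies in the converse direction, where one must justify the twin claims that polarization preserves Cohen-Macaulayness and computes the $h$-vector correctly. These facts are classical but not entirely formal, and ultimately rest on the polarization variables being arrangeable to extend a linear system of parameters on $S'/I^{\mathrm{pol}}$. An alternative approach, due originally to Stanley, sidesteps polarization by constructing a \emph{shellable} simplicial complex with prescribed $O$-sequence as $h$-vector directly from the lex ideal; shellability implies Cohen-Macaulayness, so either route completes the proof.
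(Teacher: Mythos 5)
Your forward direction is correct and is exactly the standard argument behind the citation the paper gives: mod out a Cohen--Macaulay Stanley--Reisner ring by a linear system of parameters, observe from the transformation \eqref{hf} that the Artinian quotient has Hilbert function equal to the $h$-vector, and apply Macaulay's theorem.

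The converse direction via polarization, however, has a genuine gap: it produces a Cohen--Macaulay complex of the \emph{wrong dimension}. If $h=(1,h_1,\dots,h_s)$ with $h_s>0$, the lemma (and Stanley's Theorem 6, which the paper cites) requires a Cohen--Macaulay complex of dimension $s-1$ whose $h$-vector is exactly $h$. Polarizing an Artinian monomial ideal $I\subseteq S=k[x_1,\dots,x_n]$ with $\operatorname{Hilb}(S/I)=h$ yields a complex $\Delta$ of dimension $\sum_j(a_j-1)-1$, where $a_j$ is the largest exponent of $x_j$ among the minimal generators of $I$; this number generically exceeds $s-1$. Concretely, for $h=(1,2,3)$ the lex ideal is $(x_1,x_2)^3\subseteq k[x_1,x_2]$, whose polarization sits in six variables and gives a $3$-dimensional complex with $h$-vector $(1,2,3,0,0)$, not the $1$-dimensional complex (e.g.\ the complete graph $K_4$) with $h$-vector $(1,2,3)$ that the lemma asserts exists. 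No choice of Artinian monomial ideal for this $h$ polarizes down to dimension $1$, so the defect cannot be fixed by picking a cleverer $I$; the trailing zeros are an unavoidable artifact of polarization. Thus your closing remark that ``either route completes the proof'' is not accurate: the direct construction you attribute to Stanley of a shellable complex on $h_1+s$ vertices of the correct dimension $s-1$ is not an optional alternative but the argument that actually works, and it is precisely what the paper invokes by citing \cite{St1}, Theorem~6.
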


\begin{proof} That any Cohen-Macaulay $h$-vector is an $O$-sequence is a standard commutative algebra result, which follows from a well-known theorem of Macaulay (see \cite{Ma} and, e.g., \cite{BH}, Theorem 4.2.10). The other implication is due to Stanley (\cite{St1}, Theorem 6).
\end{proof}

\section{Nonunimodality with arbitrarily many peaks}

The object of this  section is to show that Cohen-Macaulay $f$-vectors  can fail unimodality in an ``arbitrarily bad fashion'': namely, we  prove in Theorem \ref{main} that, for any $N\geq 2$, there exists a Cohen-Macaulay $f$-vector having exactly $N$ peaks.

The first example of a nonunimodal Cohen-Macaulay $f$-vector (with 2 peaks) was given by R. Stanley \cite{St1}, as a consequence of his characterization of the $h$-vectors of Cohen-Macaulay complexes (see Lemma \ref{stan}). Since Cohen-Macaulay   complexes are {pure}, any Cohen-Macaulay $f$-vector is a pure $f$-vector. In turn, pure $f$-vectors are the \emph{pure $O$-sequences} generated by \emph{squarefree} monomials, and pure $O$-sequences are those  \emph{level Hilbert functions}  corresponding to standard graded   artinian level  algebras  presented by {monomials} (see, e.g., \cite{BMMNZ,St1} for all  relevant definitions).  Thus, Theorem \ref{main} considerably extends the corresponding results proved in \cite{Za0} and \cite{BMMNZ} --- where nonunimodality with an arbitrary number of peaks was first shown  for arbitrary level Hilbert functions and then also for pure $O$-sequences --- by settling at once the case  of pure and Cohen-Macaulay $f$-vectors. 

Notice, on the other hand,  that for instance \emph{matroid} $f$-vectors, which are a subset of Cohen-Macaulay $f$-vectors, are conjectured to be all unimodal. (For some recent major progress, see  J. Huh  \cite{Hu}, and J. Huh and E. Katz \cite{HK};  as a consequence of their work,   this conjecture is now known to be true for all matroids representable over a field \cite{huh2,lenz}.) Thus,  Cohen-Macaulay $f$-vectors appear to be one of the smallest ``nice'' classes of level Hilbert functions where it is reasonable to expect the result of  Theorem \ref{main} to hold.

At the end of this section, we will give a second and entirely combinatorial proof of our theorem in the case of pure $f$-vectors.

\begin{theorem}\label{main} For any integer $N\ge 2$, there exists a nonunimodal Cohen-Macaulay $f$-vector having exactly $N$ peaks.
\end{theorem}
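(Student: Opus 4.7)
My plan is to invoke Lemma \ref{stan}, which reduces the problem to constructing an $O$-sequence $h=(1,h_1,\ldots,h_{e+1})$ whose image under the linear transformation (\ref{hf}) is an $f$-vector with exactly $N$ peaks. (The combinatorial proof for pure $f$-vectors promised at the end of the section, and suggested by the ``Steiner system'' keyword in the paper, I would handle separately via explicit design-based constructions.)

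The key observation is that $h\mapsto f$ is linear, and a single ``bump'' $h=\delta_m$ (the indicator of slot $m$) alone contributes the shifted binomial $\binom{e+1-m}{k-m}$ to $f_{k-1}$. This is unimodal with maximum near $k=(e+1+m)/2$, so bumps at well-separated positions $m_1<m_2<\cdots<m_N$ yield contributions whose peaks occupy $N$ distinct slots. Accordingly, I would construct $h$ as a ``baseline plus bumps'',
$$h \;=\; (1,c,c,\ldots,c) \;+\; \sum_{j=1}^N b_j\,\delta_{m_j},$$
where $(1,c,c,\ldots,c)$ is the baseline of length $e+2$. This baseline is an $O$-sequence because $c^{<i>}\ge c$ for all $c\ge 1$ and $i\ge 1$. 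The bumps preserve the $O$-sequence property as long as $c+b_j\le c^{<m_j-1>}$ at each perturbed slot: since $c^{<m-1>}$ grows superlinearly in $c$ for any fixed $m$, taking $c$ large supplies arbitrarily large ``slack'' $c^{<m_j-1>}-c$ in which to place each bump, and the remaining Macaulay inequalities are preserved by monotonicity of $(-)^{<i>}$.

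The final and most delicate step is tuning the parameters so that
$$f_{k-1} \;=\; \binom{e+1}{k}+c\binom{e+1}{k-1}+\sum_{j=1}^N b_j\binom{e+1-m_j}{k-m_j}$$
exhibits exactly $N$ peaks. Each bump contributes a hump of width $\sim\sqrt{e-m_j}$ centered at $k\approx(e+1+m_j)/2$, while the baseline is a smooth unimodal profile of width $\sim\sqrt{e}$ centered near the middle of $f$. I expect this calibration to be the main obstacle: each bump must be large enough to locally dominate the baseline (requiring roughly $b_j\gtrsim 2^{m_j}$) yet respect Macaulay's slack, and the $m_j$ must be spaced apart by at least a constant multiple of $\sqrt{e}$ so that neighbouring bump peaks do not merge. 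Taking $e$ much larger than $N^2$, the $m_j$ evenly spaced in a suitable sub-interval of $[0,e]$, and $c$ large enough that $c^{<m_j-1>}-c$ comfortably exceeds the required $b_j$, should produce at least $N$ peaks; a final accounting (for example, aligning $m_1$ with the baseline's maximum so that the baseline's peak is absorbed into the leftmost bump) fine-tunes the count to exactly $N$.
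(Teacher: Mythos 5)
Your high-level plan is the same as the paper's: reduce via Lemma~\ref{stan} to building an $O$-sequence $h$, exploit the linearity of~(\ref{hf}) and the observation that a spike $\delta_m$ contributes the shifted binomial $\binom{e+1-m}{k-m}$ peaking near $k=(e+1+m)/2$, and then arrange several spikes so the resulting $f$-vector has $N$ peaks while Macaulay's condition is respected by taking everything large. Where the two constructions diverge is in the calibration, and that is exactly the part you flag as ``the main obstacle'' and then leave as ``a final accounting'' --- so there is a genuine gap. You put exponentially scaled bumps $b_j\sim 2^{m_j}$ on a \emph{constant} baseline $c$, which makes the peak contributions of all the bumps roughly equal in size, and you then need to argue that (i) neighboring bump tails don't erase or merge peaks, (ii) the baseline doesn't contribute a stray peak, and (iii) the count is \emph{exactly} $N$; none of this is carried out, only sketched with ``$e\gg N^2$, spacing $\gtrsim\sqrt e$, align $m_1$ with the baseline max.'' The paper's construction is built precisely to avoid this bookkeeping: it takes a baseline of $n$ and spikes $n^{1+t\epsilon}$ with \emph{strictly increasing orders of magnitude}, so that near the $t$-th spike's peak location the $t$-th spike dominates the baseline \emph{and} every other spike by an unbounded factor as $n\to\infty$. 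That single idea pins down the peak positions (with $e=3\cdot 2^N-4$ and spike positions that halve their spacing each step) and forces the count to be exactly $N$ without analyzing interactions, at the cost only of a Macaulay check that the growth $n\to n^{1+t\epsilon}$ is legal, which reduces to $0<\epsilon\le 1/(N(e-2))$.

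Your Macaulay analysis, on the other hand, is fine (the slack $c^{<m_j-1>}-c$ is indeed superlinear in $c$), and the observation that later bumps must be larger by a factor $\sim 2^{m_{j+1}-m_j}$ is exactly the right intuition and mirrors the $n^{t\epsilon}$ scaling in the paper. If you pushed the bumps' magnitudes slightly further apart --- say $b_j\sim 2^{m_j}\cdot n^{j\epsilon}$ for a new large parameter $n$ --- you would recover the paper's dominance mechanism and could close the gap; as written, the equal-peak-height version requires a quantitative separation-of-Gaussians argument that the proposal does not supply.
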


\begin{proof}
We want to construct a Cohen-Macaulay $f$-vector $f=(f_{-1}=1,f_0,f_1,\dots ,f_e)$ with exactly $N$ peaks, where $N\ge 2$ is fixed. The basic idea of the proof is the following. First of all, notice that because of formula (\ref{hf}) and Lemma \ref{stan}, each $f_j$ can be written as a linear combination of the first $j+2$ entries of an $O$-sequence $h=(h_0=1,h_1,h_2,\dots ,h_{e+1})$, where the last entry of this sum, $h_{j+1}$,  appears with coefficient 1, and it does not appear in the formula for any of the previous $f_i$. Further, $h_{j+1}$  appears in formula (\ref{hf}) for all of $f_j, f_{j+1}, \dots, f_e$, where it has a coefficient of $\binom{e-j}{0}, \binom{e-j}{1},\dots,\binom{e-j}{e-j}$, respectively. 

It is well known that for any fixed $e-j$ even, the binomial coefficients $\binom{e-j}{i}$ form a unimodal sequence with unique largest value $\binom{e-j}{(e-j)/2}$. Therefore, one moment's thought gives that, if we can pick $h_{j+1}$ to have a greater order of magnitude than that of all of the previous $h_i$, and also of
$$h_{j+2},h_{j+3}, \dots, h_{j+(e-j)/2+1}, h_{j+(e-j)/2+2},$$
then, asymptotically, $f$ has a peak corresponding to  $f_{j+(e-j)/2}$, i.e., $f_{(e+j)/2}$. 

Thus, we will begin this construction with $j=0$, and fix $e$ large enough (as a function of $N$) so to be able to suitably iterate the construction as many times as necessary. At that point, provided  we also choose all of the $h_i$ consistently with Macaulay's growth condition,  our  $f$ will be a nonunimodal Cohen-Macaulay $f$-vector with $N$ peaks, as we wanted to show. 

Fix $e=3\cdot 2^N-4$. We will construct $f$ having exactly $N$ peaks, with the $t$-th peak given by
$$f_{3(2^t-1)2^{N-t}-2},$$
for $t=1,2,\dots, N$.


Set, for instance, $h_1=n^{1+\epsilon}$, and
$$h_2=h_3=\dots =h_{e/2+2}=n,$$
where we will choose $n$ and $\epsilon >0$ at the very end so as to force all  entries of $h$ to simultaneously satisfy  Macaulay's growth condition. 
The coefficient of $h_1$ in formula (\ref{hf}) for $f_{i}$ is $\binom{e}{i}$, for any $i=0,1,\dots, e$. Thus, since the maximum of the binomial coefficients $\binom{e}{i}$ is achieved for $i=e/2$ and the order of magnitude of $h_1$ is largest, we obtain that for $n$ large (with respect to $e$,  hence $N$),
$$1<f_0<\dots <f_{e/2}>f_{e/2+1}.$$
(Notice that the inequalities $1<f_0<\dots <f_{e/2}$ are in fact always true, and easy to verify directly using formula (\ref{hf}), for any choice of the $h_i$.) Therefore,  the first peak of $f$ is given by $f_{e/2}$, i.e., $f_{3\cdot 2^{N-1}-2}$, as  desired. 



Proceeding in a similar fashion, after we have constructed the $(t-1)$-st peak, which corresponds to $f_{3(2^{t-1}-1)2^{N-t+1}-2},$
we  construct the $t$-th peak, for any $2\le t\le N$. 

Notice that $h_{3(2^{t-1}-1)2^{N-t+1}+1}$ is the entry  in formula (\ref{hf}) for $f_{3(2^{t-1}-1)2^{N-t+1}}$ which does not appear in the formula for any of the previous $f_i$; thus, let us set
$$h_{3(2^{t-1}-1)2^{N-t+1}+1}=n^{1+ t\epsilon}.$$

Its coefficient in formula (\ref{hf}) for $f_{i}$ is  $\binom{e-3(2^{t-1}-1)2^{N-t+1}}{e-i}$, for any
$$i= 3(2^{t-1}-1)2^{N-t+1},3(2^{t-1}-1)2^{N-t+1}+1,\dots, e,$$
and the maximum of these binomial coefficients is achieved for
$$i=e/2+3(2^{t-1}-1)2^{N-t}.$$

Therefore, if we pick
$$h_{3(2^{t-1}-1)2^{N-t+1}+2}=h_{3(2^{t-1}-1)2^{N-t+1}+3}=\dots =h_{e/2+3(2^{t-1}-1)2^{N-t}+2}=n,$$
then, similarly, we easily obtain  that $f_{e/2+3(2^{t-1}-1)2^{N-t}}$, i.e., $f_{3(2^{t}-1)2^{N-t}-2}$, yields the $t$-th peak  for $n\gg 0$, as  we wanted to show. 

Notice that we have defined the entire $h$-vector $h$ up to $h_{e+1}$, and that the $N$-th peak, corresponding to $f_{e-1}$, is indeed the last.

It remains to show that  $$h=(1,h_1=n^{1+ \epsilon},h_2=n,\dots,n,n^{1+ 2\epsilon},n,\dots,h_{e-2}=n,h_{e-1}=n^{1+ N\epsilon},h_{e}=n,h_{e+1}=n)$$
is  an $O$-sequence for some integer $n\gg 0$, for a suitable choice of $\epsilon$. Clearly, the only entries where Macaulay's growth condition is not trivially verified are  those where $n$ grows to some $n^{1+ t\epsilon}$. It is easy to see that, \emph{a fortiori}, it  suffices to control the growth of  $h_{e-2}$; namely, $n$ needs to satisfy
$$n^{1+ N\epsilon}\le n^{<e-2>}.$$

But it is a standard task to check that $n^{<e-2>}\ge n^{\frac{e-1}{e-2}}$ for $n$ large enough, using  the $(e-2)$-binomial expansion of $n$ and the fact that $\binom{m}{e-2}$ is asymptotic to $\frac{m^{e-2}}{(e-2)!}$, for $m$ large. This means that we want $\epsilon>0$ to satisfy $1+ N\epsilon \le \frac{e-1}{e-2}$, i.e.,
$$0<\epsilon \le \frac{1}{N(e-2)}=\frac{1}{6N(2^{N-1}-1)}.$$

Thus, for any (rational number) $\epsilon$ within the previous range, there exists an  $n(\epsilon)$ such that for all integers $n\ge n(\epsilon)$,  with $n^{\epsilon}$ integer, the corresponding $h$-vector is an $O$-sequence, and the theorem follows.
\end{proof}

We now provide a second, combinatorial proof of our main theorem in the case of pure $f$-vectors. In fact, our argument will show more: there exist nonunimodal pure $f$-vectors $f$ with any number of peaks and such that, roughly speaking, 	\emph{these peaks may appear in any degrees of our choice in the second half of $f$}. Since  the first half of a pure $f$-vector is always increasing (see e.g. \cite{hausel,hibi}, where this fact is proved in a more general context), our result is optimal in this regard.

\begin{theorem}
Fix any $N\ge 2$, and let $k_1, k_2,\dots,k_N=e+1$ be positive integers such that $k_{i+1}\ge k_i+2$ for all $i$, and $k_1\geq k_N/2$.
Then there exists a nonunimodal pure $f$-vector $f=(1,f_0,f_1,\dots ,f_e)$, such that $f$ has  a peak  at $f_{k_i-1}$, for each $i\ge 1$.

In particular, since a truncation of a pure $f$-vector is also pure, it follows that for any integer $N\ge 2$, there exists a nonunimodal pure $f$-vector having exactly $N$ peaks.
\end{theorem}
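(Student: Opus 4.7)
The plan is to construct $\Delta=\Delta_1\sqcup\cdots\sqcup\Delta_N$ as a disjoint union on pairwise disjoint vertex sets, where each $\Delta_i$ is chosen to contribute a sharp local maximum at dimension $k_i-1$ in its own $f$-vector and the vertex-set sizes are tuned so that all these peaks survive summation. Concretely, for $i=1,\dots,N-1$ I would let $\Delta_i$ be a Steiner system $S(k_i,k_N,v_i)$ on a $v_i$-vertex set, regarded as a pure $e$-dimensional simplicial complex whose facets are the blocks of size $k_N=e+1$; existence of such systems for all $v_i$ sufficiently large in the appropriate divisibility class is Wilson's (and more recently Keevash's) asymptotic existence theorem. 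For $i=N$, I would take $\Delta_N$ to be the full $e$-skeleton of the simplex on $v_N$ vertices. The resulting disjoint union is pure of dimension $e$ with $f_j(\Delta)=\sum_{i=1}^N f^{(i)}_j$ for $j\ge 0$.

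A standard double counting in the Steiner system $S(k_i,k_N,v_i)$ yields
\[
f^{(i)}_{j-1}=\binom{v_i}{j}\ \text{for}\ j\le k_i,\qquad f^{(i)}_{j-1}=b_i\binom{k_N}{j}\ \text{for}\ k_i\le j\le k_N,
\]
with $b_i=\binom{v_i}{k_i}/\binom{k_N}{k_i}$, while $f^{(N)}_{j-1}=\binom{v_N}{j}$. The transition factor $f^{(i)}_{k_i}/f^{(i)}_{k_i-1}=(k_N-k_i)/(k_i+1)$ is strictly less than $1$ under the hypothesis $k_i\ge k_N/2$, so each $\Delta_i$ individually has a local maximum at $f_{k_i-1}$; and $\binom{v_N}{k_N}>\binom{v_N}{k_N-1}$ for $v_N\ge 2k_N$ gives $\Delta_N$ a peak at $f_e$.

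The core of the proof is then to choose $v_1,\dots,v_N$ so that all the individual peaks survive in the sum. A dominant-term analysis of $f_{k_i-1}-f_{k_i}$ (an $\binom{v_i}{k_i}$-sized favorable term against a $\binom{v_{i+1}}{k_i+1}$-sized unfavorable one) and of $f_e-f_{e-1}$ (favorable $\binom{v_N}{k_N}$ against $\sum_{i<N}b_i$) shows that, upon setting $v_i=v_N^{c_i}$, the peak inequalities become $c_i>c_{i+1}(k_i+1)/k_i$ for $i<N$ and $c_i<k_N/k_i$ for all $i$; iterating, this means $\prod_{j=i}^{N-1}(k_j+1)/k_j<c_i<k_N/k_i$. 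Joint feasibility then reduces to the single inequality $\prod_{j=i}^{N-1}(k_j+1)/k_j<k_N/k_i$. The spacing $k_1\ge k_N/2$ and $k_{i+1}\ge k_i+2$ force $r_i:=k_N/k_i\in(1,2]$ as well as $\sum_{j=i}^{N-1}1/k_j\le (N-i)/k_i\le (r_i-1)/2$; combining this with the elementary inequality $(r-1)/2<\ln r$ on $(1,2]$ gives the desired bound, and so a consistent choice of the $c_i$, hence of the $v_i$ for $v_N$ large enough, exists.

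The hardest part will be carrying out the dominant-term estimates precisely enough that all $N$ peak inequalities hold simultaneously, and in addition checking that between consecutive prescribed peaks the $f$-vector is monotone, so that no spurious peaks arise; this monotonicity is exactly what enables the ``in particular'' deduction of a pure $f$-vector with exactly $N$ peaks. Both should follow from the same dominant-term bookkeeping, with the strict gap $k_{i+1}\ge k_i+2$ and the lower bound $k_1\ge k_N/2$ supplying precisely the slack the estimates require.
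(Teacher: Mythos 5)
Your construction is genuinely different from the paper's, though both sit inside the same high-level framework: build a disjoint union of pure $e$-dimensional complexes, one per desired peak, and tune their sizes so that each component's local maximum survives summation. Where you use Steiner systems $S(k_i,k_N,v_i)$ (for $i<N$) together with one full $e$-skeleton, the paper instead takes $a_i$ disjoint copies of the $e$-skeleton of the simplex on $2k_i$ vertices (plus one $e$-skeleton on a large vertex set $r$), so the $f$-vector becomes $f_m=\sum_i a_i\binom{2k_i}{m+1}+\binom{r}{m+1}$ and the free parameters are the multiplicities $a_i$ and $r$ rather than vertex counts. This is where the two proofs diverge most usefully: the paper's building blocks exist unconditionally and the whole argument is elementary, reducing to a system of binomial inequalities solved for $(a_1,\dots,a_{N-1},r)\in\N^N$; your approach imports a heavyweight existence theorem (Wilson for $t=2$, and for $k_i\ge 3$ you genuinely need Keevash's recent asymptotic existence of designs), in exchange for a ``one complex per peak'' picture that is perhaps structurally cleaner. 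Both rely on the hypotheses $k_i\ge k_N/2$ and $k_{i+1}\ge k_i+2$ in essentially the same way, to make each component's peak land at $m=k_i-1$ and to keep consecutive peaks from colliding.

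One small remark on your feasibility argument. You reduce to choosing exponents $c_i$ with $\prod_{j=i}^{N-1}(k_j+1)/k_j<c_i<k_N/k_i$ and then prove the endpoints are compatible via $(r-1)/2<\ln r$. But the true constraints are the coupled inequalities $c_{\alpha+1}(k_\alpha+1)/k_\alpha<c_\alpha<c_{\alpha+1}k_{\alpha+1}/k_\alpha$ (the left from $f_{k_\alpha-1}>f_{k_\alpha}$, the right from $f_{k_{\alpha+1}-1}>f_{k_{\alpha+1}-2}$), and they cannot be decoupled into independent per-index intervals. The good news is that this actually makes life easier: with $c_N=1$, each interval is nonempty precisely because $k_\alpha+1<k_{\alpha+1}$, so the $c_\alpha$'s can be chosen iteratively from $\alpha=N-1$ down to $1$, and both your inequalities $\prod<c_i$ and $c_i<k_N/k_i$ fall out as consequences. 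The $(r-1)/2<\ln r$ bound is therefore a correct but unnecessary detour. You also defer the spurious-peak/monotonicity check; that is the same bookkeeping the paper handles through its two inequality systems $A_\beta$ and $B_\beta$, and it does need to be done, but the dominant-term structure you set up is adequate to carry it out.
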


\begin{proof}
For any integer $s\ge k_N$, let $\Delta^{(s)}$ be the pure simplicial complex on $s$ elements whose facets are the subsets of cardinality $k_N$. Since $e=k_N-1$, define $\Delta$ as the $e$-dimensional pure complex obtained as the disjoint union of one copy of $\Delta^{(r)}$ and $a_i$ copies of $\Delta^{(2k_i)}$, for each $i=1, \dots, N-1$. We want to show that, for $r$ large enough, there exists a choice of $(a_1,\dots,a_{N-1})\in \mathbb N^{N-1}$ such that the $f$-vector $f=(1,f_0,\dots ,f_e)$ of $\Delta$ satisfies the hypotheses  of the statement.

From our construction it is clear that, for $m=0,\dots,e$,

$$ f_m=\sum_{i=1}^{N-1} a_i\binom{2k_i}{m+1}+\binom{r}{m+1}. $$

Fix $\alpha=1,\dots,N$. In order for $f_{k_{\alpha}-1}$ to be a peak of $f$, it is enough that it satisfies the following two inequalities:

$$ \sum_{i=1}^{N-1} a_i\binom{2k_i}{k_{\alpha}+1}+\binom{r}{k_{\alpha}+1}-\sum_{i=1}^{N-1} a_i\binom{2k_i}{k_{\alpha}}-\binom{r}{k_{\alpha}}<0; $$
$$ \sum_{i=1}^{N-1} a_i\binom{2k_i}{k_{\alpha}}+\binom{r}{k_{\alpha}}-\sum_{i=1}^{N-1} a_i\binom{2k_i}{k_{\alpha}-1}-\binom{r}{k_{\alpha}-1}>0.$$

Notice that, in fact, since $f_{k_N-1}$ is the last entry of  $f$, we may discard the first inequality for $f_{k_N-1}$. Similarly, it is easy to see from our construction  that $f$ increases until $f_{k_1-1}$; thus, we may discard the second inequality for $f_{k_1-1}$.

Using the identity $\binom{n}{k} - \binom{n}{k-1}=\binom{n+1}{k} \frac{n+1-2k}{n+1}$, we  rewrite the two previous inequalities as:

\Small
$$ p_1^{(\alpha)}=p_1^{(\alpha)}(a_1,\dots,a_{N-1},r)=\sum_{i=1}^{N-1}a_i\binom{2k_i+1}{k_{\alpha}+1}\frac{2k_i+1-2(k_{\alpha}+1)}{2k_i+1} + \binom{r+1}{k_{\alpha}+1}\frac{r+1-2(k_{\alpha}+1)}{r+1}<0;$$
\normalsize
$$ p_2^{(\alpha)}=p_2^{(\alpha)}(a_1,\dots,a_{N-1},r)=\sum_{i=1}^{N-1}a_i\binom{2k_i+1}{k_{\alpha}}\frac{2k_i+1-2k_{\alpha}}{2k_i+1} + \binom{r+1}{k_{\alpha}}\frac{r+1-2k_{\alpha}}{r+1}>0.$$

Thus, we want to show that, for $\alpha=1,\dots,N$, this system of inequalities has a common solution in $(a_1,\dots,a_{N-1},r)\in \mathbb N^N$, for any given $k_1,\dots,k_N$ as in the statement.

By definition of the $k_i$, $\frac{2k_i+1-2k_{\alpha}}{2k_i+1}<0$ if and only if $\alpha>i$, and $\frac{2k_i+1-2(k_{\alpha}+1)}{2k_i+1}<0$ if and only if $\alpha \geq i$. Hence, for each $i$, the coefficients of $a_i$  in $p_2^{(\alpha+1)}$ and  $p_1^{(\alpha)}$  have the same sign.

For $\beta=1,\dots,N-1$,  define $f_1^{(\alpha,\beta)}$ (for $\alpha=1,\dots, N-1$) and $f_2^{(\alpha,\beta)}$ (for $\alpha=2,\dots, N$) as:

$$ f_1^{(\alpha,\beta)}=f_1^{(\alpha,\beta)}(a_1,\dots,a_{N-1}, r)=p_1^{(\alpha)}\frac{2k_\beta +1}{2k_\beta +1-2(k_\alpha +1)}\frac{1}{\binom{2k_\beta +1}{k_\alpha +1}}-a_\beta; $$
$$ f_2^{(\alpha,\beta)}=f_2^{(\alpha,\beta)}(a_1,\dots,a_{N-1}, r)=p_2^{(\alpha)}\frac{2k_\beta +1}{2k_\beta +1 -2k_\alpha}\frac{1}{\binom{2k_\beta +1}{k_\alpha}}-a_\beta.$$

It is easy to check that neither $f_1^{(\alpha,\beta)}$ nor $f_2^{(\alpha,\beta)}$  depends on $a_\beta$, and that, therefore,  the  conditions  $p_1^{(\alpha)} <0$ and $p_2^{(\alpha)}>0$ are equivalent to the following  two systems of $N-1$ simultaneous inequalities, which we will solve for $a_{\beta}$: the system $A_{\beta}$,  given by

\Small
$$ a_{\beta}<f_2^{(N,\beta)}; {\ }{\ }a_{\beta}<f_2^{(N-1,\beta)}; {\ }{\ }\dots ; {\ }{\ } a_{\beta}<f_2^{(\beta +1,\beta)};{\ }{\ }  a_{\beta}<-f_1^{(\beta -1,\beta)}; {\ }{\ }a_{\beta}<-f_1^{(\beta -2,\beta)};{\ }{\ } \cdots ;{\ }{\ } a_{\beta}<-f_1^{(1,\beta)} ;$$
\normalsize
and the system $B_{\beta}$, given by

\Small
$$ a_{\beta}>f_1^{(N-1,\beta)}; {\ }{\ }a_{\beta}>f_1^{(N-2,\beta)}; {\ }{\ }\dots ; {\ }{\ }a_{\beta}>f_1^{(\beta,\beta)} ;{\ }{\ } a_{\beta}>-f_2^{(\beta,\beta)};{\ }{\ }a_{\beta}>-f_2^{(\beta-1,\beta)};{\ }{\ } \dots ; {\ }{\ }a_{\beta}>-f_2^{(2,\beta)} .$$

\normalsize
Notice that, since $k_{\alpha +1}> k_{\alpha}+1$,  $\binom{r+1}{k_{\alpha+1}}\frac{r+1-2k_{\alpha+1}}{r+1}$ has a greater order of magnitude than $\binom{r+1}{k_{\alpha}+1}\frac{r+1-2(k_{\alpha}+1)}{r+1}$, for $r\gg 0$. This means that the term involving $r$ in $p_2^{(\alpha+1)}$ is larger than the corresponding term in $p_1^{(\alpha)}$; similarly,   for $r\gg 0$, the term involving $r$ in $f_2^{(\alpha +1,\beta)}$ is larger than the corresponding term in $f_1^{(\alpha,\beta)}$.

It is easy to see that if $f_2^{(\alpha +1,\beta)}$ is in the system $A_{\beta}$, then $f_1^{(\alpha,\beta)}$ is in the system $B_{\beta}$, both with positive signs. Similarly $f_1^{(\alpha,\beta)}$  in $A_{\beta}$ implies $f_2^{(\alpha+1,\beta)}$ in $B_{\beta}$, both with negative signs. Hence, for  $r\gg 0$, all terms involving $r$ in the inequalities of the system $A_{\beta}$ have a greater order of magnitude  than all of those in the system $B_{\beta}$. 

It follows that, for $r\gg 0$, the largest integer solution $a_\beta$ to $A_{\beta}$ is at least equal to the smallest integer solution $a_\beta$ to  $B_\beta$. Also, $f_2^{(N,\beta)}$ is in $A_\beta$ for all $\beta$, and for $r\gg 0$, its term involving $r$ has a greater order of magnitude than any other term. Hence, the largest integer solution $a_\beta$ to $A_\beta$ is positive.

Thus, if we choose $r$ large enough so that, for all $i=1,\dots,N-1$, the largest solution  $a_i\in \mathbb N$ to the system $A_i$  is at least equal to the smallest solution $a_i\in \mathbb N$  to the system $B_i$, then we  have clearly determined a tuple $(a_1,\dots,a_{N-1},r)\in \mathbb N^N$ that solves  our  initial system of inequalities. This completes the proof of the theorem.
\end{proof}

\section{The failing of the Interval Property}

In this final section, we provide an elegant, infinite family of counterexamples to the \emph{Interval Property} (IP) for pure $f$-vectors of dimension 2. The IP was first introduced in \cite{Za1}, where the second author conjectured it for the set of level and Gorenstein Hilbert functions (see \cite{Za1} for the relevant definitions).  Namely, the IP says that if $S$ is a given class of positive integer sequences, and $h,h'\in S$  coincide in all entries but one, say $h=(h_0,\dots ,h_{i-1},h_i,h_{i+1},\dots )$ and $h'=(h_0,\dots ,h_{i-1},h_i+\alpha ,h_{i+1},\dots )$ for some index $i$ and some positive integer $\alpha $, then $(h_0,\dots ,h_{i-1},h_i+\beta ,h_{i+1},\dots )$ is also in $S$, for each  $\beta =1,\dots , \alpha -1.$

As for level and Gorenstein Hilbert functions, the IP, which appears to be consistent with the main techniques used in that area, is still wide open. However, even though the IP also holds for many other important sequences in combinatorics and combinatorial commutative algebra, including for  Cohen-Macaulay $f$-vectors (see \cite{BMMNZ,St3} for details), most recently it has been disproved in a few interesting cases. Indeed, the IP was conjectured in \cite{BMMNZ} for pure $O$-sequences, and asked as a question (Question 9.4) for pure $f$-vectors (see also \cite{MNZ_exp}). As for pure $O$-sequences, in \cite{BMMNZ} the IP was proved in  degree 3, thus allowing a new approach to Stanley's matroid $h$-vector conjecture \cite{HSZ}. However, it was then disproved in large degree by A. Constantinescu and M. Varbaro \cite{CV}. Also, recently, R. Stanley and the second author showed the IP not to hold for arbitrary \emph{$r$-differential posets} (see \cite{RSFZ} for details), even though it is still open  for the main family of such posets, i.e., 1-differential posets (see also \cite{Byr}). 

In this section, we provide a simple argument that disproves the IP also for pure $f$-vectors. In fact, we even show the existence of a nice, infinite family of counterexamples in dimension 2, which is the smallest possible dimension where the IP might have failed. Our techniques are  constructive; in particular, they involve a suitable application of Steiner triple systems (some objects of design theory), and Stanley's characterization of Cohen-Macaulay $h$-vectors.

Recall that a  \emph{Steiner system} $S(l,m,r)$ is an $r$-element set $V$, together with a collection of $m$-subsets of $V$, called \emph{blocks}, such that every $l$-subset of $V$ is contained in exactly one block (see e.g. \cite{codi,liro}). The case $S(2,3,r)$ is that of Steiner \emph{triple} systems (STS) of \emph{order} $r$.

Clearly, since all blocks have the same cardinality, by identifying each element of $V$ with a variable $y_i$,  the existence of Steiner systems (and similarly for other designs) is equivalent to that of certain pure $f$-vectors. For instance,  the existence of $S(2,3,7)$ is tantamount to that of 7 squarefree degree 3 monomials  in $R=K[y_1,\dots,y_7]$, say $M_1,\dots,M_7$,  such that each squarefree degree 2 monomial of $R$ divides exactly one of the $M_i$. Since such monomials can  easily be shown to exist (uniquely, up to  isomorphism), then also a (unique) STS $S(2,3,7)$ does exist (called the \emph{Fano plane}, for geometric reasons).

In fact, it is trivial to see that if $S(2,3,r)$ exists, then $r$ is congruent to 1 or  3 modulo 6. A nice classical result of T.P. Kirkman \cite{kirk} is then that this condition is also sufficient. In other words, we have that
$$f=\left(1,r,\binom{r}{2}, \binom{r}{2}/3 \right)$$
is a pure $f$-vector if and only if $r$ is congruent to 1 or  3 modulo 6.

\begin{theorem}\label{ip} The Interval Property fails for the set of pure $f$-vectors. Namely, fix any integer $r$ congruent to 1 or 3 modulo 6, $r\ge 7$. Then:
$$f=\left(1,r,\binom{r}{2},\binom{r}{2}/3\right)$$ is a pure $f$-vector;
$$f'=\left(1,r,\binom{r}{2}-1,\binom{r}{2}/3\right)$$ is \emph{not} a pure $f$-vector; and
$$f''=\left(1,r,b,\binom{r}{2}/3\right)$$ is a Cohen-Macaulay, hence  pure, $f$-vector for all  $b=b_0, b_0+1, \dots, \frac{(r-1)(r+6)}{6}$, where $b_0$ is the smallest integer such that $h=\left(1,r-3,b_0-2r+3,\frac{(r-1)(r+6)}{6}-b_0\right)$ is an $O$-sequence. In particular,  since $0<b_0\le \frac{(r-1)(r+6)}{6}<\binom{r}{2}-1,$  the IP is disproved.
\end{theorem}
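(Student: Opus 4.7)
My plan is to handle the three assertions of Theorem~\ref{ip} in turn. The purity of $f=(1,r,\binom{r}{2},\binom{r}{2}/3)$ is immediate from Kirkman's theorem \cite{kirk}, already quoted: for $r\equiv 1,3\pmod 6$ an STS $S(2,3,r)$ exists, and its $\binom{r}{2}/3$ blocks, regarded as the facets of a pure 2-complex on $r$ vertices, cover each of the $\binom{r}{2}$ pairs exactly once, so they realize $f$.

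The key step is to show $f'=(1,r,\binom{r}{2}-1,\binom{r}{2}/3)$ is \emph{not} a pure $f$-vector. I would argue by contradiction via a double-count and a parity check. If $\Delta$ realizes $f'$, then counting pair-facet incidences yields $3\cdot \binom{r}{2}/3 = \binom{r}{2}$ incidences distributed among $\binom{r}{2}-1$ edges (each in at least one facet by purity); hence exactly one edge $\{a,b\}$ lies in two facets and the rest in one, while exactly one pair $\{u,v\}$ of vertices fails to be an edge. Now I would count pair-facet incidences through $u$: every facet containing $u$ excludes $v$ and contributes $2$ to the count of $u$-incident covered pairs, so $2t_u$ (where $t_u$ is the number of facets through $u$) equals $r-2$ if $u\notin\{a,b\}$ and $r-1$ if $u\in\{a,b\}$. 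Since $r$ is odd, the first case is impossible, so $u\in\{a,b\}$, and by symmetry $v\in\{a,b\}$, forcing $\{a,b\}=\{u,v\}$ --- contradicting the fact that $\{u,v\}$ is uncovered.

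For the Cohen-Macaulay range I would invoke Lemma~\ref{stan}. Solving (\ref{hf}) in dimension $e=2$ for an $f$-vector $(1,r,b,\binom{r}{2}/3)$ yields the $h$-vector
\[
h=\left(1,\ r-3,\ b-2r+3,\ \tfrac{(r-1)(r+6)}{6}-b\right).
\]
Macaulay's growth condition then reduces to: nonnegativity $2r-3\le b\le (r-1)(r+6)/6$; the inequality $h_2\le h_1^{<1>}=\binom{r-2}{2}$, a routine check that is slack throughout the range (it simplifies to $r\ge 1$); and $h_3\le h_2^{<2>}$. Since $h_2$ grows and $h_3$ decays in $b$, there is a minimal integer $b_0$ above which the last condition holds; the endpoint $b=(r-1)(r+6)/6$ gives $h_3=0$ and so trivially satisfies it, ensuring $b_0\le (r-1)(r+6)/6$, and for all $b\in[b_0,(r-1)(r+6)/6]$ the $h$-vector is an $O$-sequence.

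Finally I would verify the numerical facts $b_0\ge 2r-3>0$ and $(r-1)(r+6)/6<\binom{r}{2}-1$, the latter simplifying to $(r-1)(r-3)>3$, trivially true for $r\ge 7$. These say that the forbidden value $b=\binom{r}{2}-1$ sits strictly between the Cohen-Macaulay range (topped by $(r-1)(r+6)/6$) and the Steiner value $\binom{r}{2}$, so $f'$ is a hole in the interval between two pure $f$-vectors, disproving the IP. The main obstacle is the parity argument for $f'$: the crucial observation is that the odd parity of $r$ (forced by $r\equiv 1,3\pmod 6$) compels the unique doubly covered edge to contain both endpoints of the unique non-edge, which is impossible; the remaining ingredients are Kirkman's theorem and Macaulay's characterization of Cohen-Macaulay $h$-vectors.
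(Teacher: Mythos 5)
Your proposal is correct and follows essentially the same route as the paper: Kirkman's theorem for $f$, the computation of the $h$-vector together with Stanley's Lemma~\ref{stan} for the Cohen--Macaulay range, and a double-count plus parity argument (with $r$ odd) to show $f'$ is not pure. Your phrasing of the parity step (concluding that the doubly covered edge would have to coincide with the uncovered pair) is a symmetric reformulation of the paper's WLOG reduction, but it is the same underlying argument.
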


\begin{proof} That $f=\left(1,r,\binom{r}{2},\binom{r}{2}/3\right)$ is a pure $f$-vector was shown in the discussion before the theorem, since $r$ is  congruent to 1 or 3 modulo 6, and an STS $S(2,3,r)$ does exist under these hypotheses.

That $f''=\left(1,r,b,\binom{r}{2}/3\right)$ is a Cohen-Macaulay $f$-vector for all values of $b$ as in the statement can easily be verified, using Stanley's characterization of the $h$-vectors of Cohen-Macaulay  complexes (Lemma \ref{stan}) along with the linear transformation (\ref{hf}). 

Proving that $0<b_0\le \frac{(r-1)(r+6)}{6}<\binom{r}{2}-1$, for $r$ as in the statement, is also a standard exercise (in fact, we always have $b_0< \frac{(r-1)(r+6)}{6}$). Thus, it remains to show that $f'=\left(1,r,\binom{r}{2}-1,\binom{r}{2}/3\right)$ is not a pure $f$-vector. A simple elementary argument is 	the following.

Suppose  $f'$ is pure. Then there exist  $t=\binom{r}{2}/3$ squarefree degree 3  monomials in $R=K[y_1,\dots,y_r]$, say $M_1, \dots, M_t$, such that all but one of the squarefree degree 2  monomials of $R$ divide some $M_i$. Let this monomial be $y_1y_2$. Since the $M_i$ clearly have a total of $\binom{r}{2}$ degree 2 divisors (counted with multiplicity), each $y_jy_k$ appears exactly once as a factor of some $M_i$, with the only exceptions of $y_1y_2$, which does not appear, and one other squarefree degree 2  monomial, appearing twice.

Without loss of generality, we can assume  this latter  monomial is either $y_1y_3$ or $y_3y_4$. In either case, it follows that no two $M_i$ that are divisible by $y_2$ can have another  variable in common. Therefore, since $y_1y_2$ divides no $M_i$, the number of $M_i$ divisible by $y_2$ is $(r-2)/2$, which is impossible, since $r$ is odd. This completes the proof of the theorem.
\end{proof}

\begin{example} The smallest counterexample to the IP given by Theorem \ref{ip} is when $r=7$. Namely, $(1,7,21,7)$ is a pure $f$-vector (it corresponds to the Steiner triple system, or Fano plane, $S(2,3,7)$), and so are the Cohen-Macaulay $f$-vectors $(1,7,12,7)$ and $(1,7,13,7)$.

However, the $f$-vector $(1,7,20,7)$ is {not} pure, thus violating the IP. In fact, it is a simple exercise to check that $(1,7,b,7)$ is a pure $f$-vector if and only if $b\in [12,19]\cup \{21\}$. See \cite{CKK} for a generalization of this fact.
\end{example}

\section*{Acknowledgements} We wish to thank the referee for comments, and Don Kreher for an interesting discussion of design theory, which has proved very helpful in showing that $\left(1,r,\binom{r}{2}-1,\binom{r}{2}/3\right)$ is not a pure $f$-vector. He, C.J. Colbourn and M.S. Keranen \cite{CKK} (personal communication) have then been working on a characterization of the pure $f$-vectors of the form $f=(1,r,b,c)$.


\end{document}